\documentclass[12pt]{article}

\usepackage[T1]{fontenc}
\usepackage{amsmath}
\usepackage{amssymb}
\usepackage{amsfonts}
\usepackage{fullpage}
\usepackage{latexsym,amssymb,amsmath}
\usepackage{euscript}

\usepackage{tikz}
\usetikzlibrary{patterns}

\usepackage[colors]{optsys}

\allowdisplaybreaks

\newcommand{\NATURE}{\Omega}
\renewcommand{\nature}{\omega}
\newcommand{\BELIEF}{\Delta\np{\NATURE}}
\newcommand{\belief}{p}

\newcommand{\SecondOrderBelief}{Q}

\newcommand{\MaximalExpectedUtility}{maximal expected utility}

\renewcommand{\fonctiondeux}{h}
\newcommand{\NEUTRAL}{A}
\newcommand{\neutral}{a}

\newcommand{\NEUTRALpayoff}{U}

\newcommand{\MORE}{{M}}

\newcommand{\LESS}{{L}}

\newcommand{\less}{l}
\newcommand{\MEDIUM}{{N}}

\newcommand{\medium}{n}
\renewcommand{\ValueFunction}[1]{V_{#1}}
\newcommand{\VoI}{\mathrm{V\!o\!I}}

\renewcommand{\Primal}{X}

\newcommand{\signed}{y}
\renewcommand{\sequence}[2]{\np{#1}_{#2}}           
\newcommand{\scalpro}[2]{\left\langle#2 \mid \:#1\right\rangle}  

\title{Adding Decision Problem\\ Makes Information More Valuable}

\author{Michel \textsc{De Lara}\thanks{%
CERMICS, ENPC, Institut Polytechnique de Paris, CNRS, Marne-la-Vall\'ee, France
E-mail: \texttt{michel.delara@enpc.fr}}}

\begin{document}

\maketitle

    \begin{abstract}%
We consider decision-making under incomplete information about an unknown state
of nature. We show that a decision problem yields a higher value of information
than another, uniformly across information structures, if and only if it is obtained by adding
an independent, parallel decision problem.
    \end{abstract}%

\textbf{Keywords:} value of information, information more valuable, separable
additive utility.





 \section{Introduction}
 \label{Introduction}

 Information has value. But what makes information more valuable?
 We consider decision-making under incomplete information about an unknown state
 of nature.
A decision problem is given by a utility function defined over the Cartesian
product of an action set with a set of states of nature.
 Information is traditionally modeled as a distribution of posterior beliefs on states of nature
(see \cite{Blackwell:1953} and references therein), also called information
structure (or system \cite{Marschak-Miyasawa:1968}).
The value of information (VoI) is the maximal expected utility with distribution of
posterior beliefs minus the one with only prior belief (barycenter). 

In this paper, we focus on the little studied subject of interpersonal comparison of VoIs
--- a question at the center of our 2009 working paper~\cite{DeLara:2009},
and recently studied in \cite{whitmeyer2024makinginformationvaluable}:
what makes information more valuable, for any information structure,
in one decision problem than in another?
The heart of \cite{whitmeyer2024makinginformationvaluable} is finding economic transformations (sufficient conditions) which increase
the value of information, but also what transformations increase the demand for
information.
 To the difference with \cite{whitmeyer2024makinginformationvaluable}, we
 provide an ``if and only if'' answer as follows: 
 a decision problem yields a higher value of information than another, uniformly
 across information structures,
 if and only the former is obtained from the latter by adding
 an independent, parallel decision problem.
Otherwise said, a decision problem that provides more valuable information can
be interpreted as asking the decision maker to solve the original decision problem
(by choosing an action) and, separately, to solve another decision problem (by
choosing another parallel, independent action) with her payoff being the sum
across the two.

We sketch the proof as follows.
The (convex) value function of a decision problem is the maximal expected utility as a
function of prior belief. 
 \cite[Theorem~3.1]{whitmeyer2024makinginformationvaluable}
 establishes\footnote{%
A result that can also be found in \cite[Proposition~0]{Jones-Ostroy:1984},
and is possibly folk knowledge in the economic literature.
\label{it:folk}
}
   that more valuable information
 is equivalent to convexity of the difference~$f-g$ of the two (convex) value
 functions~$f,g$ associated with two decision problems.
 Now, modulo technical issues (e.g.,
 \cite{Machina:1984,Corrao-Fudenberg-Levine:2024} and see Appendix~\ref{Proof_of_Theorem}) convex functions over beliefs are equivalent
to decision problems. Thus, the resulting convex function~$h=f-g$ (over
 beliefs) is itself a value function for some decision problem --- 
$h$ is a supremum of linear functions, and each of them can
be identified with an action (that is, a vector of state-dependent payoffs) ---
which is exactly the decision problem we need to add, with additively-separable payoffs, to the
original (giving the value function $f=g+h$).
\bigskip

The paper provides a formal statement and a mathematical proof as follows.
In Sect.~\ref{information_more_valuable}, we define decision problem, value
function, information, value of information. Then, we state our main result: 
more valuable information, regardless of what the information
is, is equivalent to a decision maker being afforded access to some additional decision problem (on top of the original
one) and her utility being additively separable across the two problems.
The proof is given in Appendix~\ref{Proof_of_Theorem}.


\section{More valuable information}
\label{information_more_valuable}

We denote $\RR$ the set of real numbers, $\RR_{+} =
\ClosedIntervalOpen{0}{+\infty} $ and
$\barRR = \RR \cup \na{-\infty,+\infty} $.
We consider a nonempty finite set~$\NATURE$, that represents states of nature.
We identify the elements of the simplex
\begin{equation}
\BELIEF = \defset{\belief \in \RR_{+}^{\NATURE}}{\sum_{\nature \in \NATURE} \belief\np{\nature}=1} 
\subset \RR^{\NATURE}   
\end{equation}
with {probability distributions} over~$\NATURE$, also called \emph{beliefs}.

A \emph{decision problem} (on~$\NATURE$) is given by 
a \emph{utility function}~$\NEUTRALpayoff \colon \NEUTRAL\times
\NATURE\to \RR$, where $\NEUTRAL$ is a nonempty \emph{set of decisions/actions}.
Under belief~$\belief \in \BELIEF$, the decision maker chooses 
an action~$\neutral\in\NEUTRAL$ that maximizes~$\sum_{\nature \in \NATURE}\belief\np{\nature} \NEUTRALpayoff(\neutral,\nature)$.
The resulting \MaximalExpectedUtility\ gives rise to the so-called (convex) \emph{value function}
\begin{equation}
  \ValueFunction{\NEUTRALpayoff} \colon \BELIEF \to \barRR \eqsepv
\ValueFunction{\NEUTRALpayoff}(\belief)
= \sup_{\neutral\in\NEUTRAL}
\sum_{\nature\in\NATURE} \belief\np{\nature} \NEUTRALpayoff(\neutral,\nature)
\eqsepv \forall \belief \in \BELIEF 
  \eqfinp
    \label{eq:value_function_utility_classic}
\end{equation}
In all that follows, we will only consider decision problems for which 
the value function~\( \ValueFunction{\NEUTRALpayoff} \) in~\eqref{eq:value_function_utility_classic}
takes finite values (on~\( \BELIEF \)).
In that case, as \( \BELIEF \) is bounded polyhedral, we get that
\(\ValueFunction{\NEUTRALpayoff}\colon \BELIEF \to \RR \) is continuous and
bounded, by~\cite{Gale-Klee-Rockafellar:1968}. 


Assuming that decision-makers react to information in a Bayesian way,
\emph{information} may be modeled as a \emph{distribution \( \SecondOrderBelief \in
\Delta\bp{\BELIEF} \)  of posterior beliefs}, with the barycenter
\( \int_{\BELIEF}\belief \dd\SecondOrderBelief(\belief)
\in {\BELIEF} \) being the \emph{prior belief}. 
%

Consider a decision problem~$\NEUTRALpayoff\colon\NEUTRAL\times\NATURE\to\RR$ with finite value function~\( \ValueFunction{\NEUTRALpayoff} \).
The \emph{value of information} --- that is, the value to a decision-maker
facing decision problem~$\NEUTRALpayoff\colon\NEUTRAL\times\NATURE\to\RR$
of having information~\( \SecondOrderBelief \in \Delta\bp{\BELIEF}\) --- is
\begin{equation}
  \VoI_{\NEUTRALpayoff}\np{\SecondOrderBelief}=
\int_{\BELIEF}\ValueFunction{\NEUTRALpayoff}\np{\belief} \dd\SecondOrderBelief\np{\belief}
-\ValueFunction{\NEUTRALpayoff}\Bp{\int_{\BELIEF}\belief
  \dd\SecondOrderBelief\np{\belief}} 
\eqsepv \forall \SecondOrderBelief \in \Delta\bp{\BELIEF}
 \eqfinp
\label{eq:VoI_classic}
\end{equation}
%
The quantity in~\eqref{eq:VoI_classic} is well defined and satisfies
\begin{equation}
  +\infty > \VoI_{\NEUTRALpayoff}\np{\SecondOrderBelief} \geq 0
 \eqsepv \forall \SecondOrderBelief \in \Delta\bp{\BELIEF}
 \eqfinp
\label{eq:VoI_classic_properties}
\end{equation}
Indeed, on the one hand, as the value function~\( \ValueFunction{\NEUTRALpayoff} \) in~\eqref{eq:value_function_utility_classic}
takes finite values, it is continuous and bounded by~\cite{Gale-Klee-Rockafellar:1968}, hence
\( \int_{\BELIEF}\ValueFunction{\NEUTRALpayoff}\np{\belief}
\dd\SecondOrderBelief\np{\belief} \) is well defined and finite.
On the other hand, \( \VoI_{\NEUTRALpayoff}\np{\SecondOrderBelief} \geq 0 \)
since the value function~\( \ValueFunction{\NEUTRALpayoff} \) is convex (as a
supremum of linear functions).




\begin{theorem}
\label{th:more_valuable_information_equivalent_additively_separable}
Consider two decision problems~$\MORE$ and~$\LESS$, given by the two 
utility functions
$\NEUTRALpayoff_{\MORE} \colon \NEUTRAL_{\MORE}\times\NATURE\to \RR$
and
$\NEUTRALpayoff_{\LESS} \colon \NEUTRAL_{\LESS}\times\NATURE\to \RR$.
Suppose that the value functions \( \ValueFunction{\NEUTRALpayoff_{\MORE}} \)
and \( \ValueFunction{\NEUTRALpayoff_{\LESS}} \)
in~\eqref{eq:value_function_utility_classic}
take finite values (on~\( \BELIEF \)).
The following statements are equivalent.
\begin{enumerate}
\item
\label{it:values_information_more}
  Decision problem~$\MORE$ values information more than decision
  problem~$\LESS$, in the sense that 
\begin{equation}
  \VoI_{\NEUTRALpayoff_{\MORE}}\np{\SecondOrderBelief}
  \geq
  \VoI_{\NEUTRALpayoff_{\LESS}}\np{\SecondOrderBelief}
\eqsepv \forall \SecondOrderBelief \in \Delta\bp{\BELIEF}
  \eqfinp 
\end{equation}
\item
\label{it:additively_separable}
  There exists a decision problem $\NEUTRALpayoff_{\MEDIUM} \colon
  \NEUTRAL_{\MEDIUM}\times \NATURE\to \RR$ --- 
whose value function \( \ValueFunction{\NEUTRALpayoff_{\MEDIUM}} \)
in~\eqref{eq:value_function_utility_classic}
take finite values (on~\( \BELIEF \)) ---   
such that, if we form the new decision problem~$\widetilde{\MORE}$ with 
Cartesian product decision set~$\NEUTRAL_{\widetilde{\MORE}}=\NEUTRAL_{\LESS}\times\NEUTRAL_{\MEDIUM}$ and with
additively separable 
utility function~$\NEUTRALpayoff_{\widetilde{\MORE}}=\NEUTRALpayoff_{\LESS}+\NEUTRALpayoff_{\MEDIUM} 
\colon \np{\NEUTRAL_{\LESS}\times\NEUTRAL_{\MEDIUM}}\times \NATURE\to \RR$
defined by
\begin{equation}
 \NEUTRALpayoff_{\widetilde{\MORE}}\bp{\np{\less,\medium},\nature}
  =\NEUTRALpayoff_{\LESS}(\less,\nature)+\NEUTRALpayoff_{\MEDIUM}(\medium,\nature) \eqsepv
\forall \np{\less,\medium}\in\NEUTRAL_{\LESS}\times\NEUTRAL_{\MEDIUM}
  \eqsepv \forall \nature \in \NATURE 
  \eqfinv
\label{eq:additively_separable}
\end{equation}
then \( \widetilde{\MORE} \) and \( \MORE \) are essentially the same
decision problems, in the sense that their value functions~\eqref{eq:value_function_utility_classic} are identical:
\begin{equation}
  \ValueFunction{\NEUTRALpayoff_{\widetilde{\MORE}}} =
  \ValueFunction{\NEUTRALpayoff_{\MORE}}
  \eqfinp
\label{eq:value_functions_identical}
\end{equation}
\end{enumerate}
\end{theorem}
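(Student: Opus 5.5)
The plan is to reduce both statements to a property of the difference of value functions. Write $f=\ValueFunction{\NEUTRALpayoff_{\MORE}}$ and $g=\ValueFunction{\NEUTRALpayoff_{\LESS}}$. Both are finite, convex, continuous and bounded on $\BELIEF$ by~\cite{Gale-Klee-Rockafellar:1968}. The key observation is that, for the additively separable problem~\eqref{eq:additively_separable}, the supremum over a Cartesian product of a sum of separate terms splits:
\[
\ValueFunction{\NEUTRALpayoff_{\widetilde{\MORE}}}(\belief)
=\sup_{\less,\medium}\Bp{\sum_{\nature}\belief(\nature)\NEUTRALpayoff_{\LESS}(\less,\nature)+\sum_{\nature}\belief(\nature)\NEUTRALpayoff_{\MEDIUM}(\medium,\nature)}
=g(\belief)+\ValueFunction{\NEUTRALpayoff_{\MEDIUM}}(\belief).
\]
This identity holds because both suprema are finite. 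Hence Item~\ref{it:additively_separable} is equivalent to the following: $h:=f-g$ is the value function of some decision problem with finite values.

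For \ref{it:additively_separable}$\Rightarrow$\ref{it:values_information_more}, the $\VoI$ in~\eqref{eq:VoI_classic} is linear in the value function. So $\VoI_{\NEUTRALpayoff_{\MORE}}=\VoI_{\NEUTRALpayoff_{\LESS}}+\VoI_{\NEUTRALpayoff_{\MEDIUM}}$, and the last term is $\geq 0$ by~\eqref{eq:VoI_classic_properties}.

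For \ref{it:values_information_more}$\Rightarrow$\ref{it:additively_separable}, the argument has two steps.

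First, show that $h=f-g$ is convex on $\BELIEF$. Take $\belief_1,\belief_2\in\BELIEF$ and $\lambda\in[0,1]$, and apply Item~\ref{it:values_information_more} to the two-point distribution $\SecondOrderBelief=\lambda\delta_{\belief_1}+(1-\lambda)\delta_{\belief_2}$. Rearranging gives $\lambda h(\belief_1)+(1-\lambda)h(\belief_2)\geq h(\lambda\belief_1+(1-\lambda)\belief_2)$. This is the folk result \cite[Theorem~3.1]{whitmeyer2024makinginformationvaluable}. Moreover, $h$ is continuous and bounded as a difference of such functions.

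Second, represent $h$ as a decision problem. This is the main obstacle, because a continuous convex function on the simplex must be written as a supremum of linear functionals on $\RR^{\NATURE}$ restricted to $\BELIEF$, with no affine constant. I take the set of actions $\NEUTRAL_{\MEDIUM}$ to be the set of all $\signed\in\RR^{\NATURE}$ satisfying $\scalpro{\belief}{\signed}\leq h(\belief)$ for all $\belief\in\BELIEF$, with $\NEUTRALpayoff_{\MEDIUM}(\signed,\nature)=\signed(\nature)$. Then $\ValueFunction{\NEUTRALpayoff_{\MEDIUM}}\leq h$ automatically.

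Two facts handle the constants and the reverse inequality. Since $\sum_\nature\belief(\nature)=1$ on $\BELIEF$, any affine minorant $\belief\mapsto\scalpro{\belief}{\signed}+c$ equals $\scalpro{\belief}{\signed+c\mathbf 1}$ there, so constants are absorbed. For the reverse inequality $\ValueFunction{\NEUTRALpayoff_{\MEDIUM}}\geq h$, extend $h$ by $+\infty$ outside $\BELIEF$. This extension is a proper convex function. It is lower semicontinuous because $h$ is continuous on the closed set $\BELIEF$, which must be checked including at the relative boundary; continuity up to the boundary of the polytope is given by~\cite{Gale-Klee-Rockafellar:1968}. The Fenchel–Moreau theorem then shows that the extension is the supremum of its affine minorants, so $h$ is the pointwise supremum of affine minorants on all of $\BELIEF$, boundary included.

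Thus $\ValueFunction{\NEUTRALpayoff_{\MEDIUM}}=h$ is finite, and by the splitting identity above, $\ValueFunction{\NEUTRALpayoff_{\widetilde{\MORE}}}=g+h=f$, which is~\eqref{eq:value_functions_identical}.
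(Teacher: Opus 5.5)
Your proof is correct. It shares the paper's overall architecture: the splitting $V_{U_{\widetilde M}}=V_{U_L}+V_{U_N}$ (which the paper phrases as additivity of support functions over a Minkowski sum), linearity of $\mathrm{VoI}$ in the value function for (2)$\Rightarrow$(1), and convexity of $h=f-g$ from (1). It also ends with exactly the same action set $X=\{y\in\mathbb{R}^{\Omega} : \langle p,y\rangle\le h(p)\ \text{for all } p\in\Delta(\Omega)\}$ with $U_N(y,\omega)=y(\omega)$.

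The genuine difference is the lemma used to get $\sup_{y\in X}\langle p,y\rangle\ge h(p)$. The paper homogenizes. It defines $\tilde h(y)=\langle y,\mathbf 1\rangle\,h\bigl(y/\langle y,\mathbf 1\rangle\bigr)$ on $\mathbb{R}^{\Omega}_{+}\setminus\{0\}$, with $\tilde h(0)=0$ and $+\infty$ elsewhere. It then proves, via a compactness argument with a separate case for limits at the origin, that $\mathrm{epi}\,\tilde h$ is the closure of $\mathbb{R}_{++}\,\mathrm{epi}\,h$. Hence $\tilde h$ is closed sublinear, and the paper invokes the theorem that closed sublinear functions are support functions.

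You instead extend $h$ by $+\infty$ off the simplex, where lower semicontinuity is immediate from continuity of $h$ on the closed set $\Delta(\Omega)$. You then apply Fenchel--Moreau and absorb each affine constant through $\langle p,y\rangle+c=\langle p,y+c\mathbf 1\rangle$ on the simplex. These are two faces of the same fact, since homogenizing is precisely what kills the constant, and both rest on Gale--Klee--Rockafellar for continuity of $h$ up to the relative boundary. Your route spares you the closedness verification for the conic epigraph. The paper's route presents $h$ directly as a support function, matching its framing $V_U=\sigma_{\{U(a,\cdot)\}}$.

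One detail worth stating explicitly is that $X\neq\emptyset$, since an action set must be nonempty. This follows from Fenchel--Moreau, or simply from $\bigl(\min_{\Delta(\Omega)}h\bigr)\mathbf 1\in X$.
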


\begin{proof}
The proof is given in Appendix~\ref{Proof_of_Theorem}.
\end{proof}

\section{Conclusion}
\label{Conclusion}

This question ``for a given economic agent, what changes in decision variables and utility function
 make information more valuable?'' is studied in
 \cite{whitmeyer2024makinginformationvaluable}.
 The results within indicate  that, by extending the set of options with new decision variables,
 there is no systematic comparison regarding the value of information;
 stringent conditions are required to make information more valuable 
 by adding (in the sense of union) decision variables.

 Our Theorem~\ref{th:more_valuable_information_equivalent_additively_separable}
 furnishes an explanation for the above observation.
 Systematic comparison regarding the value of information is obtained if and
 only if the set of options is extended by \emph{multiplying} decision
 variables and \emph{adding utility}.
 Aside the original set of options, the economic agent has to select
 \emph{also} in a new set of options, leading to a pair of decisions (instead of
 a single one in the original problem), and then adding utilities.

 \bigskip

\textbf{Acknowledgments.} I thank Carlos Al\'os-Ferrer for his economic
comments.
I especially thank the Editor of \emph{Journal of Political Economy}, as well as
four Reviewers for their insightful comments and recommendations.

\appendix

\section{Proof of Theorem~\ref{th:more_valuable_information_equivalent_additively_separable}}
\label{Proof_of_Theorem}

We denote by $\RR$ the set of real numbers,
$\barRR = \ClosedIntervalClosed{-\infty}{+\infty} = \RR \cup \na{-\infty,+\infty} $,
$\RR_{+} = \ClosedIntervalOpen{0}{+\infty} $,
$\RR_{++}=\OpenIntervalOpen{0}{+\infty}$.
We consider a nonempty finite set~$\NATURE$.
Let \( \Primal \subset \RR^{\NATURE} \) be a nonempty set.
For any function \( \fonctionprimal \colon \Primal \to \barRR \),
its \emph{epigraph} is
\( \epigraph\fonctionprimal= \defset{
  \np{\primal,t}\in\Primal\times\RR}%
{\fonctionprimal\np{\primal} \leq t} \subset \RR^{\NATURE}\times\RR \),
its \emph{effective domain} is
\( \dom\fonctionprimal= \defset{\primal\in\Primal}{%
  \fonctionprimal\np{\primal} <+\infty} \).
A function
\( \fonctionprimal \colon \Primal \to \barRR \) is said to be \emph{convex}
if its epigraph is a convex subset of \( \RR^{\NATURE}\times\RR \), 
\emph{proper} if it never takes the
value~$-\infty$ and that \( \dom\fonctionprimal \not = \emptyset \),
\emph{lower semi continuous (\lsc)} if its epigraph is
a closed subset of \( \RR^{\NATURE}\times\RR \),
\emph{positively homogeneous} if its epigraph is a cone.
%
%
The scalar product between $\primal, \signed \in \RR^{\NATURE}$ is 
\(  \scalpro{\signed}{\primal} = 
\sum_{\nature \in \NATURE} \primal\np{\nature} \signed\np{\nature} \). 
The \emph{support function} of a subset
$\Primal\subset\RR^{\NATURE}$ is the closed convex
positively homogeneous function
\cite[Chapter~V, Definition~2.1.1]{Hiriart-Urruty-Lemarechal-I:1993} 
defined by
\begin{equation}
\SupportFunction{\Primal} \colon \RR^{\NATURE} \to \barRR \eqsepv    
    \SupportFunction{\Primal}\np{\signed}
    =
      \sup_{\primal\in \Primal} \scalpro{\signed}{\primal} \eqsepv \forall \signed\in\RR^{\NATURE}
      \eqfinp
      \label{eq:support_function}
\end{equation}

\begin{proof}
  We start by observing --- by comparing~\eqref{eq:value_function_utility_classic}
  and~\eqref{eq:support_function} --- that the value function~\( \ValueFunction{\NEUTRALpayoff} \)
associated with a decision problem~$\NEUTRALpayoff\colon\NEUTRAL\times\NATURE\to\RR$,
can be written as
\begin{equation}
  \ValueFunction{\NEUTRALpayoff}=\SupportFunction{\nc{\NEUTRALpayoff}} \mtext{
    where } \nc{\NEUTRALpayoff}=\Ba{ \NEUTRALpayoff(\neutral,\cdot),
  \neutral\in\NEUTRAL} 
  =\Ba{ \bp{\NEUTRALpayoff(\neutral,\nature)}_{\nature\in\NATURE}, \neutral\in\NEUTRAL} \subset \RR^{\NATURE}
  \eqfinp
    \label{eq:value_function_support_function}
\end{equation}

\noindent$\bullet$
Item~\ref{it:additively_separable} $\implies$
Item~\ref{it:values_information_more}. 

For any {second-order belief} \( \SecondOrderBelief \in\Delta\bp{\BELIEF}\), we get that 
\begin{subequations}
\begin{align*}
  \VoI_{{\MORE}}\np{\SecondOrderBelief} 
  &=
\int_{\BELIEF}\ValueFunction{\NEUTRALpayoff_{{\MORE}}}\np{\belief} \dd\SecondOrderBelief\np{\belief}
-\ValueFunction{\NEUTRALpayoff_{\MORE}}\Bp{\int_{\BELIEF}\belief
    \dd\SecondOrderBelief\np{\belief}}
    \intertext{by definition~\eqref{eq:VoI_classic} of the value of information~\(\VoI_{{\MORE}}\) }
  &=
\int_{\BELIEF}\ValueFunction{\NEUTRALpayoff_{\widetilde{\MORE}}}\np{\belief} \dd\SecondOrderBelief\np{\belief}
-\ValueFunction{\NEUTRALpayoff_{\widetilde{\MORE}}}\Bp{\int_{\BELIEF}\belief
    \dd\SecondOrderBelief\np{\belief}}
    \tag{as \(  \ValueFunction{\NEUTRALpayoff_{\widetilde{\MORE}}} =
  \ValueFunction{\NEUTRALpayoff_{\MORE}} \) by assumption~\eqref{eq:value_functions_identical}}
  \\
  &=
    \int_{\BELIEF}\SupportFunction{\nc{\NEUTRALpayoff_{\widetilde{\MORE}}}}\np{\belief}
    \dd\SecondOrderBelief\np{\belief}
    -\SupportFunction{\nc{\NEUTRALpayoff_{\widetilde{\MORE}}}}
    \Bp{\int_{\BELIEF}\belief\dd\SecondOrderBelief\np{\belief}}
    \tag{as \(\ValueFunction{\NEUTRALpayoff_{\MORE}}=\SupportFunction{\nc{\NEUTRALpayoff_{\widetilde{\MORE}}}}\)
    by~\eqref{eq:value_function_support_function}}
  \\
  &=
  \int_{\BELIEF}\SupportFunction{\nc{\NEUTRALpayoff_{\LESS}+\NEUTRALpayoff_{\MEDIUM}}}\np{\belief}
   \dd\SecondOrderBelief\np{\belief}
  -\SupportFunction{\nc{\NEUTRALpayoff_{\LESS}+\NEUTRALpayoff_{\MEDIUM}}}
      \Bp{\int_{\BELIEF}\belief\dd\SecondOrderBelief\np{\belief}}
  \tag{as $\NEUTRALpayoff_{\widetilde{\MORE}}=\NEUTRALpayoff_{\LESS}+\NEUTRALpayoff_{\MEDIUM}$
    by~\eqref{eq:additively_separable}}
 \\
  &=
  \int_{\BELIEF}\SupportFunction{\nc{\NEUTRALpayoff_{\LESS}}+\nc{\NEUTRALpayoff_{\MEDIUM}}}\np{\belief}
   \dd\SecondOrderBelief\np{\belief}
  -\SupportFunction{\nc{\NEUTRALpayoff_{\LESS}}+\nc{\NEUTRALpayoff_{\MEDIUM}}}
      \Bp{\int_{\BELIEF}\belief\dd\SecondOrderBelief\np{\belief}}
    \intertext{as \( \nc{\NEUTRALpayoff_{\LESS}+\NEUTRALpayoff_{\MEDIUM}}=
\nc{\NEUTRALpayoff_{\LESS}}+\nc{\NEUTRALpayoff_{\MEDIUM}} \) follows
    easily from~\eqref{eq:value_function_support_function}}
  &=
  \int_{\BELIEF}\bp{\SupportFunction{\nc{\NEUTRALpayoff_{\LESS}}}+\SupportFunction{\nc{\NEUTRALpayoff_{\MEDIUM}}}}\np{\belief}
   \dd\SecondOrderBelief\np{\belief}
  -\bp{\SupportFunction{\nc{\NEUTRALpayoff_{\LESS}}}+\SupportFunction{\nc{\NEUTRALpayoff_{\MEDIUM}}}}
      \Bp{\int_{\BELIEF}\belief\dd\SecondOrderBelief\np{\belief}}
    \intertext{as \(
    \SupportFunction{\nc{\NEUTRALpayoff_{\LESS}}+\nc{\NEUTRALpayoff_{\MEDIUM}}}=
    \SupportFunction{\nc{\NEUTRALpayoff_{\LESS}}}+\SupportFunction{\nc{\NEUTRALpayoff_{\MEDIUM}}}\)
    \cite[Chapter~V, Section~2]{Hiriart-Urruty-Lemarechal-I:1993}}
   &=
 \underbrace{ \int_{\BELIEF}\SupportFunction{\nc{\NEUTRALpayoff_{\LESS}}}\np{\belief}\dd\SecondOrderBelief\np{\belief}
-\SupportFunction{\nc{\NEUTRALpayoff_{\LESS}}}
      \Bp{\int_{\BELIEF}\belief\dd\SecondOrderBelief\np{\belief}}}_{\VoI_{\LESS}\np{\SecondOrderBelief}}
     +
     \underbrace{ \int_{\BELIEF}\SupportFunction{\nc{\NEUTRALpayoff_{\MEDIUM}}}\np{\belief}
     \dd\SecondOrderBelief\np{\belief}
       -\SupportFunction{\nc{\NEUTRALpayoff_{\MEDIUM}}}
      \Bp{\int_{\BELIEF}\belief\dd\SecondOrderBelief\np{\belief}}}_{\VoI_{\MEDIUM}\np{\SecondOrderBelief}\geq 0 \textrm{ by~\eqref{eq:VoI_classic_properties}}}
  \intertext{by~\eqref{eq:value_function_support_function} and
  definition~\eqref{eq:VoI_classic} of the value of information, and as all
  quantities are finite}
  &\geq
    \VoI_{\LESS}\np{\SecondOrderBelief}
    \eqfinp 
\end{align*}
\end{subequations}
 \bigskip 

\noindent$\bullet$
Item~\ref{it:values_information_more} $\implies$ Item~\ref{it:additively_separable}.

It is well known that \(\VoI_{\NEUTRALpayoff_{\MORE}}\geq\VoI_{\NEUTRALpayoff_{\LESS}}\) if and only if
\(\VoI_{\NEUTRALpayoff_{\MORE}}-\VoI_{\NEUTRALpayoff_{\LESS}}\) is a convex
function on~\(\Delta\bp{\BELIEF}\) (\cite[Proposition~0]{Jones-Ostroy:1984},
\cite[Footnote~9]{Chambers-Liu-Rehbeck:2020},
\cite[Theorem~3.1]{whitmeyer2024makinginformationvaluable}, see also
Footnote~\ref{it:folk}).
Indeed, we have that (all the integrals below are
finite by assumption  that the value functions \(\ValueFunction{\NEUTRALpayoff_{\MORE}}\)
and \( \ValueFunction{\NEUTRALpayoff_{\LESS}} \) take finite values (on~\( \BELIEF \)))
\begin{align*}
  &
    \VoI_{\NEUTRALpayoff_{\MORE}}\np{\SecondOrderBelief}
    \geq\VoI_{\NEUTRALpayoff_{\LESS}}\np{\SecondOrderBelief}
\eqsepv \forall \SecondOrderBelief \in \Delta\bp{\BELIEF}    
    \\
  \iff&
 \int_{\BELIEF}\ValueFunction{\NEUTRALpayoff_{\MORE}}\np{\belief} \dd\SecondOrderBelief\np{\belief}
-\ValueFunction{\NEUTRALpayoff_{\MORE}}\bp{\int_{\BELIEF}\belief \dd\SecondOrderBelief\np{\belief}}
  \\
  & \geq
    \int_{\BELIEF}\ValueFunction{\NEUTRALpayoff_{\LESS}}\np{\belief} \dd\SecondOrderBelief\np{\belief}
-\ValueFunction{\NEUTRALpayoff_{\LESS}}\bp{\int_{\BELIEF}\belief
  \dd\SecondOrderBelief\np{\belief}}
\eqsepv \forall \SecondOrderBelief \in \Delta\bp{\BELIEF}    
    \\
  \iff&
        \int_{\BELIEF}\bp{\ValueFunction{\NEUTRALpayoff_{\MORE}}-\ValueFunction{\NEUTRALpayoff_{\LESS}}}
        \np{\belief} \dd\SecondOrderBelief\np{\belief}
        \geq \bp{\ValueFunction{\NEUTRALpayoff_{\MORE}}-\ValueFunction{\NEUTRALpayoff_{\LESS}}}
        \bp{\int_{\BELIEF}\belief \dd\SecondOrderBelief\np{\belief}}
        \eqsepv \forall \SecondOrderBelief \in \Delta\bp{\BELIEF}
    \\
  \iff&
 \ValueFunction{\NEUTRALpayoff_{\MORE}}-\ValueFunction{\NEUTRALpayoff_{\LESS}}
        \text{ is a convex function on~}\BELIEF
        \eqfinp 
\end{align*}
Now, we set 
\(
\fonctiondeux=\ValueFunction{\NEUTRALpayoff_{\MORE}}-\ValueFunction{\NEUTRALpayoff_{\LESS}}\),
which is convex, and takes finite values as the value functions \(
\ValueFunction{\NEUTRALpayoff_{\MORE}} \)
and \( \ValueFunction{\NEUTRALpayoff_{\LESS}} \)
in~\eqref{eq:value_function_utility_classic}
take finite values by assumption. 
We extend $\fonctiondeux\colon \BELIEF \to \RR $ into 
\( \widetilde{\fonctiondeux} \colon \RR^{\NATURE}\to \barRR \) by
\begin{equation}
  \widetilde{\fonctiondeux}(0)=0 \eqsepv 
  \widetilde{\fonctiondeux}(\signed) =  \scalpro{\signed}{1}
  \fonctiondeux\Bp{\frac{\signed}{\scalpro{\signed}{1}}}
  \eqsepv \forall \signed \in \RR_{+}^{\NATURE}\setminus\na{0}
  \eqsepv \widetilde{\fonctiondeux}(\signed) = +\infty  \eqsepv \forall \signed \in
\RR^{\NATURE}\setminus\RR_{+}^{\NATURE}
        \eqfinv
        \label{eq:positively_homogeneous_extension_formula}
      \end{equation}
      where \( \scalpro{\signed}{1} = \sum_{\nature \in \NATURE}
      \signed\np{\nature}\)
(which is $>0$ when \(\signed \in \RR_{+}^{\NATURE}\setminus\na{0}\)).
 The function~\( \widetilde{\fonctiondeux} \) is the \emph{positively homogeneous extension} of the function
 \( \fonctiondeux \): indeed, it is positively
 homogeneous since it is straightforward to observe that its epigraph is
\begin{equation}
  \epigraph\widetilde{\fonctiondeux} = \np{\RR_{++} \epigraph\fonctiondeux} \cup
  \np{\na{0}\times\RR_{+}}
  \eqfinv
\label{eq:epigraph_fonctiondeux}
\end{equation}
which is a cone.
We are going to show that the function \( \widetilde{\fonctiondeux}\) is proper convex \lsc.
As the function \( \fonctiondeux \colon \BELIEF \to \RR \) is proper (it
takes finite values),
it is obvious by~\eqref{eq:positively_homogeneous_extension_formula}
that the function \( \widetilde{\fonctiondeux} \colon \RR^{\NATURE}\to \barRR \) is proper.
We are going to prove that the conic epigraph~\( \epigraph\widetilde{\fonctiondeux} \)
given by~\eqref{eq:epigraph_fonctiondeux} is closed convex.
For that purpose, we show that
\( \epigraph\widetilde{\fonctiondeux} = \overline{\RR_{++} \epigraph\fonctiondeux} \)
by two inclusions.

To prove the inclusion \( \epigraph\widetilde{\fonctiondeux} \subset \overline{\RR_{++} \epigraph\fonctiondeux} \),
it suffices to show that
\( \na{0}\times \RR_{+} \subset \overline{\RR_{++} \epigraph\fonctiondeux} \),
because of~\eqref{eq:epigraph_fonctiondeux}.
As the function \( \fonctiondeux \colon \BELIEF \to \RR \) takes finite values,
any \( \belief\in\BELIEF \) is such that 
\( -\infty < \fonctiondeux(\belief) < +\infty \).
Then, for any \( t\in \RR_{+} \), we get that 
  \( \RR_{++}\epigraph\fonctiondeux \ni
  \bp{ \frac{1}{n} \belief, \frac{1}{n} \fonctiondeux\np{\belief}+t} 
  \to_{n\to +\infty} \np{0,t} \in \na{0}\times \RR_{+} \). 
  Thus, we have shown that
  \( \na{0}\times \RR_{+} \subset \overline{\RR_{++} \epigraph\fonctiondeux} \),
  hence that 
  \( \epigraph\widetilde{\fonctiondeux} \subset \overline{\RR_{++} \epigraph\fonctiondeux} \)
  by~\eqref{eq:epigraph_fonctiondeux}.

  To prove the reverse inclusion
  \( \epigraph\widetilde{\fonctiondeux} \supset  \overline{\RR_{++} \epigraph\fonctiondeux} \), 
  we consider sequences 
\( \sequence{\belief_n}{n\in\NN} \subset \BELIEF \), 
\( \sequence{\lambda_n}{n\in\NN} \subset \RR_{++} \), 
and \( \sequence{t_n}{n\in\NN} \subset \RR_{+} \)
such that the sequence
\( \Bp{\bp{\lambda_n\belief_n,\lambda_n\fonctiondeux(\belief_n)+t_n}}_{n\in\NN}
\subset \RR_{++} \epigraph\fonctiondeux \) 
converges to \( \np{\bar\signed,\bar z} \in \RR^{\NATURE}\times \RR \).
Notice that \( \np{\bar\signed,\bar z} \in \RR_{+}^{\NATURE}\times \RR \),
as $\RR_{++}\BELIEF = \RR_{+}^{\NATURE}\setminus\na{0}$, hence
$\overline{\RR_{++}\BELIEF} = \RR_{+}^{\NATURE}$.
We are going to show that \( \np{\bar\signed,\bar z} \in \epigraph\widetilde{\fonctiondeux} \)
given by~\eqref{eq:epigraph_fonctiondeux}.
From \( \lambda_n\belief_n \to_{n\to +\infty} \bar\signed \) and
\( \scalpro{\belief_n}{1}= \sum_{\nature \in \NATURE}\belief_n\np{\nature}=1 \), we get that \( \lambda_n = \scalpro{\lambda_n\belief_n}{1} \to_{n\to +\infty}
\scalpro{\bar\signed}{1} \in \RR_{+} \).
As \( \sequence{\belief_n}{n\in\NN} \subset \BELIEF \), where \( \BELIEF \) is a compact set,
we can always suppose (up to relabeling) that there exists \( \bar\belief\in\BELIEF \) such that 
\( \belief_n \to_{n\to +\infty} \bar\belief \in\BELIEF \),
hence that \( \liminf_{n\to +\infty}\fonctiondeux(\belief_n) \geq \fonctiondeux(\bar\belief) \)
since the function \( \fonctiondeux \colon \BELIEF \to \RR \) is \lsc\
(it is even continuous by \cite{Gale-Klee-Rockafellar:1968}).
Then, from \( \bp{\lambda_n\belief_n,\lambda_n\fonctiondeux(\belief_n)+t_n}
\to_{n\to +\infty} \np{\bar\signed,\bar z} \),
  we deduce, on the one hand, that 
  \( \bar\signed =  \lim_{n\to +\infty}\lambda_n\belief_n = \scalpro{\bar\signed}{1}\bar\belief \)
  and, on the other hand, that
  \begin{equation*}  
 \bar z =  \lim_{n\to +\infty}\lambda_n\fonctiondeux(\belief_n)+\underbrace{t_n}_{\geq 0}
  \geq \liminf_{n\to +\infty}\lambda_n\fonctiondeux(\belief_n)
  = \underbrace{\scalpro{\bar\signed}{1}}_{\geq 0} \, \liminf_{n\to +\infty}\fonctiondeux(\belief_n) 
  \geq \scalpro{\bar\signed}{1} \fonctiondeux(\bar\belief)
   \eqfinv 
\end{equation*}
  hence that
\( \bar\signed = \scalpro{\bar\signed}{1}\bar\belief \) and 
\( \bar z \geq \scalpro{\bar\signed}{1} \fonctiondeux(\bar\belief) \).
Then, we consider two cases.
In the case where \( \scalpro{\bar\signed}{1} > 0 \), we get that
\( \np{\bar\signed,\bar z} \in \RR_{++} \epigraph\fonctiondeux \subset \epigraph\widetilde{\fonctiondeux} \)
by~\eqref{eq:epigraph_fonctiondeux}.
In the case where \( \scalpro{\bar\signed}{1} = 0 \), we get that
\( \np{\bar\signed,\bar z}=\np{0,\bar z} \in \na{0}\times\RR_{+} \subset \epigraph\widetilde{\fonctiondeux} \)
by~\eqref{eq:epigraph_fonctiondeux}.
Thus, we have proved that \( \overline{\RR_{++} \epigraph\fonctiondeux} \subset \epigraph\widetilde{\fonctiondeux} \).

Having proven both
\( \epigraph\widetilde{\fonctiondeux} \subset  \overline{\RR_{++} \epigraph\fonctiondeux} \)
and \( \overline{\RR_{++} \epigraph\fonctiondeux} \subset \epigraph\widetilde{\fonctiondeux} \),
we conclude that
\( \epigraph\widetilde{\fonctiondeux} =\overline{\RR_{++}\epigraph\fonctiondeux} \).
We deduce by \cite[Chapter~V, Proposition~1.1.3]{Hiriart-Urruty-Lemarechal-I:1993} 
that the function~\( \widetilde{\fonctiondeux} \) is sublinear and \lsc.

Now, we have finally obtained that the function~\( \widetilde{\fonctiondeux} \)
is proper, sublinear and \lsc.
As a consequence,
by \cite[Chapter~V, Theorem~3.1.1]{Hiriart-Urruty-Lemarechal-I:1993}, 
we get that \( \widetilde{\fonctiondeux} = \SupportFunction{\Primal} \), where
\(  \Primal= \Ba{\primal\in \RR^{\NATURE} \mid \scalpro{\belief}{\primal}
    \leq \widetilde{\fonctiondeux}\np{\signed} \eqsepv \forall \signed\in
    \RR^{\NATURE}} \)
(as the function~\( \widetilde{\fonctiondeux} \) satisfies~\eqref{eq:positively_homogeneous_extension_formula}
it is easy to see that \( \Primal=\Ba{\primal\in \RR^{\NATURE} \mid \scalpro{\belief}{\primal}
  \leq \fonctiondeux\np{\belief} \eqsepv \forall \belief\in\BELIEF } \)).
The subset \( \Primal \) is not empty
because the function~\( \widetilde{\fonctiondeux} \) is proper.
By~\eqref{eq:positively_homogeneous_extension_formula} and~\eqref{eq:support_function}, we conclude that
\begin{equation}
  \fonctiondeux\np{\belief}
=\widetilde{\fonctiondeux}\np{\belief}
=\SupportFunction{\Primal}\np{\belief}
  =  \sup_{\primal\in \Primal}\scalpro{\signed}{\primal} \eqsepv \forall\belief\in\BELIEF
  \eqfinp
\label{eq:value_function_support_function}
    \end{equation}
    To prove Item~\ref{it:additively_separable}, it suffices now to define the
    decision problem~$\MEDIUM$ by $\NEUTRALpayoff_{\MEDIUM} \colon
    \NEUTRAL_{\MEDIUM}\times \NATURE\to \RR$, where
    \begin{equation}
      \NEUTRAL_{\MEDIUM}= \Primal \eqsepv  \NEUTRALpayoff_{\MEDIUM}(\primal,\nature)=
  \primal\np{\nature} \eqsepv \forall \nature \in \NATURE 
  \eqfinv
  \label{eq:above_decision_problem}
    \end{equation}
and to observe that~\eqref{eq:value_function_support_function} is the value
function~\eqref{eq:value_function_utility_classic}
of the above decision problem~\eqref{eq:above_decision_problem}.
\bigskip

This ends the proof.

\end{proof}

\bibliographystyle{alpha}
\bibliography{DeLara,InformationMoreValuable}

\end{document}